\def\ssign{\textsection\nobreak\hspace{1pt plus 0.3pt}}
\newif\if@sectsign \@sectsigntrue
\def\@seccntformat#1{%
	\protect\textup{\protect\@secnumfont
		\ifnum\pdfstrcmp{#1}{section}=\z@
			\if@sectsign \protect\ssign\csname the#1\endcsname\protect\enspace
			\else \csname the#1\endcsname\protect\@secnumpunct\fi
		\else
			\csname the#1\endcsname\protect\@secnumpunct
		\fi
	}%
}
\g@addto@macro\appendix{\@sectsignfalse}
\renewcommand{\PrintDOI}[1]{\doi{#1}}
\numberwithin{equation}{section}
\numberwithin{figure}{section}
\theoremstyle{plain}
\newtheorem{thm}{Theorem}[section]
\newtheorem{prop}[thm]{Proposition}
\theoremstyle{definition}
\newtheorem{dfn}[thm]{Definition}
\theoremstyle{remark}
\let\eps=\varepsilon
\let\theta=\vartheta
\let\rho=\varrho
\let\phi=\varphi
\let\polishlcross=\l
\DeclareRobustCommand{\l}{\ifmmode\ell\else\polishlcross\fi}
\def\EE{\mathds E}
\def\NN{\mathds N}
\def\PP{\mathds P}
\def\@defcal#1{\expandafter\def\csname c#1\endcsname{{\mathcal{#1}}}}
\def\@defscr#1{\expandafter\def\csname cc#1\endcsname{{\mathscr{#1}}}}
\def\@deffrak#1{\expandafter\def\csname f#1\endcsname{{\mathfrak{#1}}}}
\def\moverlay{\mathpalette\mov@rlay}
\def\mov@rlay#1#2{\leavevmode\vtop{   \baselineskip\z@skip
		\lineskiplimit-\maxdimen
\ialign{\hfil$\m@th#1##$\hfil\cr#2\crcr}}}
\newcommand{\charfusion}[3][\mathord]{
	#1{\ifx#1\mathop\vphantom{#2}\fi
		\mathpalette\mov@rlay{#2\cr#3}
	}
\ifx#1\mathop\expandafter\displaylimits\fi}
\newcommand{\dcup}{\charfusion[\mathbin]{\cup}{\cdot}}
\DeclareFontFamily{U}  {MnSymbolC}{}
\DeclareSymbolFont{MnSyC}         {U}  {MnSymbolC}{m}{n}
\DeclareFontShape{U}{MnSymbolC}{m}{n}{
	<-6>  MnSymbolC5
	<6-7>  MnSymbolC6
	<7-8>  MnSymbolC7
	<8-9>  MnSymbolC8
	<9-10> MnSymbolC9
	<10-12> MnSymbolC10
<12->   MnSymbolC12}{}
\DeclareMathSymbol{\powerset}{\mathord}{MnSyC}{180}
\def\tand{\ \text{and}\ }
\def\qand{\quad\text{and}\quad}
\def\qqand{\qquad\text{and}\qquad}
\let\setminus=\smallsetminus
\let\emptyset=\varnothing
\let\to=\lra
\def\@shortto@sub_#1{_{\begingroup\let\to\rightarrow#1\endgroup}}
\newcommand{\@withshortto}[1]{#1\@ifnextchar_\@shortto@sub\relax}
	\@ifundefined{\@op}{}{%
		\expandafter\let\csname @save@\@op\expandafter\endcsname\csname\@op\endcsname
		\expandafter\edef\csname\@op\endcsname{\noexpand\@withshortto\expandafter\noexpand\csname @save@\@op\endcsname}%
	}%
	\DeclareFontFamily{U}{matha}{\hyphenchar\font45}%
	\DeclareFontShape{U}{matha}{m}{n}{<-11> matha10 <11-> matha12}{}%
	\DeclareSymbolFont{matha}{U}{matha}{m}{n}%
\DeclareMathSymbol{\hp@RU}{\mathord}{matha}{"E1}
\DeclareMathSymbol{\hp@LU}{\mathord}{matha}{"E0}
\newcommand{\hpsc}{0.6}\newcommand{\hpgapn}{3}\newcommand{\hpbot}{0.68}
\newcommand{\hpbarb}{0.55}\newcommand{\hpoh}{0.1}\newcommand{\hpminf}{0.5}\newcommand{\hpov}{0.04}\newcommand{\hpdx}{0}
\newcommand{\rhpinL}{0.1}\newcommand{\rhpinR}{0.0}
\newcommand{\lhpinL}{0.1}\newcommand{\lhpinR}{0.0}
\newsavebox\hpA\newsavebox\hpG\newsavebox\hpB\newsavebox\hpF\newsavebox\hpL\newsavebox\hpSl
\newdimen\hp@li\newdimen\hp@L\newdimen\hp@gap\newdimen\hp@dx\newdimen\hp@bs\newdimen\hp@oh\newdimen\hp@rw\newdimen\hp@ov\newdimen\hp@ri\newdimen\hp@xi\newif\ifhp@cramp
\def\hp@nil{}
\def\@hpfirst#1#2\hp@nil{\ifcat A\noexpand#1\sbox\hpF{$\m@th\SavedStyle#1$}\else\sbox\hpF{$\m@th\SavedStyle#1#2$}\fi}
\def\@hplast#1{\ifx#1\hp@nil\else\def\@hplastv{#1}\expandafter\@hplast\fi}
\newcommand{\hp@setA}[1]{%
	\sbox\hpA{$\m@th\SavedStyle\kern-\nulldelimiterspace\radical\z@{#1}$}%
	\dimen@=1.25\fontdimen8\textfont3
	\if D\m@switch \dimen@=\fontdimen8\textfont3 \advance\dimen@.25\fontdimen5\textfont2 \fi
	\if S\m@switch \dimen@=1.25\fontdimen8\scriptfont3 \fi
	\if s\m@switch \dimen@=1.25\fontdimen8\scriptscriptfont3 \fi
	\advance\dimen@-\ht\hpA \ht\hpA=-\dimen@}
\newcommand{\hp@common}[3]{%
	\ifhp@cramp \hp@setA{#3}\else \sbox\hpA{$\m@th\SavedStyle#3$}\fi
	\@hpfirst#3\hp@nil \@hplast#3\hp@nil \sbox\hpL{$\m@th\SavedStyle\@hplastv$}%
	\hp@li=#1\wd\hpF \hp@ri=#2\wd\hpL
	\hp@L=\wd\hpA \advance\hp@L-\hp@li \advance\hp@L-\hp@ri
	\hp@bs=\hpbarb\wd\hpG \hp@oh=\hpoh\ht\hpG
	\hp@xi=\fontdimen8\textfont3
	\if S\m@switch\hp@xi=\fontdimen8\scriptfont3\fi \if s\m@switch\hp@xi=\fontdimen8\scriptscriptfont3\fi
	\hp@dx=\hpdx\wd\hpA \hp@ov=\hpov\wd\hpG}
\newcommand{\hp@overgap}{\hp@gap=\hpgapn\hp@xi \@tempdima=\hpbot\ht\hpG \advance\hp@gap-\@tempdima}
\newcommand{\hp@overbox}[2]{\mathord{\vbox{\offinterlineskip\ialign{##\cr
	\hbox to\wd\hpA{\kern#1#2\hfil}\cr
	\noalign{\kern\hp@gap}\hbox{\usebox\hpA}\cr}}}}
\newcommand{\hpdrawR}{%
	\ifdim\hp@L<\wd\hpG
		\clipbox{\dimexpr\wd\hpG-\hp@L\relax{} 0pt 0pt \dimexpr-\hp@oh\relax{}}{\usebox\hpG}%
	\else
		\hbox to\dimexpr\hp@rw+\hp@ov\relax{\cleaders\hbox{\usebox\hpSl}\hfil}\kern-\hp@ov\usebox\hpB%
	\fi}
\newcommand{\hp@bR}[3]{\hp@cramptrue
	\sbox\hpG{\scalebox{\hpsc}{$\m@th\SavedStyle\hp@RU$}}%
	\hp@common{#1}{#2}{#3}%
	\ifdim\hp@L<\hpminf\wd\hpG \hp@L=\hpminf\wd\hpG \fi
	\hp@rw=\hp@L \advance\hp@rw-\wd\hpG \advance\hp@rw\hp@bs
	\sbox\hpB{\clipbox{\dimexpr\hp@bs\relax{} 0pt 0pt \dimexpr-\hp@oh\relax{}}{\usebox\hpG}}%
	\sbox\hpSl{\clipbox{\dimexpr\wd\hpG*1/5\relax{} 0pt \dimexpr\wd\hpG*39/50\relax{} 0pt}{\usebox\hpG}}%
	\hp@overgap \hp@overbox{\dimexpr\wd\hpA-\hp@ri-\hp@L+\hp@dx\relax}{\hpdrawR}}
\newcommand{\hpdrawL}{%
	\ifdim\hp@L<\wd\hpG
		\clipbox{0pt 0pt \dimexpr\wd\hpG-\hp@L\relax{} \dimexpr-\hp@oh\relax{}}{\usebox\hpG}%
	\else
		\usebox\hpB\kern-\hp@ov\hbox to\dimexpr\hp@rw+\hp@ov\relax{\cleaders\hbox{\usebox\hpSl}\hfil}%
	\fi}
\newcommand{\hp@bL}[3]{\hp@cramptrue
	\sbox\hpG{\scalebox{\hpsc}{$\m@th\SavedStyle\hp@LU$}}%
	\hp@common{#1}{#2}{#3}%
	\ifdim\hp@L<\hpminf\wd\hpG \hp@L=\hpminf\wd\hpG \fi
	\hp@rw=\hp@L \advance\hp@rw-\wd\hpG \advance\hp@rw\hp@bs
	\sbox\hpB{\clipbox{0pt 0pt \dimexpr\hp@bs\relax{} \dimexpr-\hp@oh\relax{}}{\usebox\hpG}}%
	\sbox\hpSl{\clipbox{\dimexpr\wd\hpG*39/50\relax{} 0pt \dimexpr\wd\hpG*1/5\relax{} 0pt}{\usebox\hpG}}%
	\hp@overgap \hp@overbox{\dimexpr\hp@li+\hp@dx\relax}{\hpdrawL}}
\NewDocumentCommand\rharp{O{\rhpinL}O{\rhpinR}m}{\ThisStyle{\hp@bR{#1}{#2}{#3}}}
\NewDocumentCommand\lharp{O{\lhpinL}O{\lhpinR}m}{\ThisStyle{\hp@bL{#1}{#2}{#3}}}
\def\bdelta{\boldsymbol{\delta}}
\def\js{j_{\star}}
\def\Js{J_{\star}}
\def\UJs{U_{\Js}}
\def\Hs{H_{\star}}
\def\Gs{G_{\star}}
\def\phis{\phi_{G}}
\newcommand{\oset}[3][0ex]{\mathrel{\mathop{#3}\limits^{
			\vbox to#1{\kern-2.1\ex@
				\hbox{$\scriptstyle#2$}\vss}}}}
\DeclareMathSymbol{*}{\mathbin}{symbols}{"03}
\DeclareSymbolFont{stmry}{U}{stmry}{m}{n}
\DeclareMathSymbol\arrownot\mathrel{stmry}{"58}
\DeclareMathSymbol\Arrownot\mathrel{stmry}{"59}
\newcommand{\pushright}[1]{\ifmeasuring@#1\else\omit\hfill$\displaystyle#1$\fi\ignorespaces}
\newcommand{\pushleft}[1]{\ifmeasuring@#1\else\omit$\displaystyle#1$\hfill\fi\ignorespaces}
\DeclareMathOperator{\Var}{Var}
\DeclareMathOperator{\Cov}{Cov}
\DeclareMathOperator{\ER}{ER}
\begin{document}
\title[Canonical Ramsey numbers for partite hypergraphs]{Canonical Ramsey numbers for partite hypergraphs}

\author[M.~Az\'ocar]{Mat\'ias Az\'ocar Carvajal}
\address{Fachbereich Mathematik, Universit\"at Hamburg, Hamburg, Germany}
\email{matias.azocar.carvajal@uni-hamburg.de}
\author[G.~Santos]{Giovanne Santos}
\address{Departamento de Ingeniería Matemática, Universidad de Chile, Santiago, Chile}
\email{gsantos@dim.uchile.cl}
\author[M.~Schacht]{Mathias Schacht}
\address{Fachbereich Mathematik, Universit\"at Hamburg, Hamburg, Germany}
\email{schacht@math.uni-hamburg.de}

\thanks{The first two authors have been supported by ANID and DAAD under
	ANID-PFCHA/Doctorado Acuerdo Bilateral DAAD Becas Chile/2023-62230021
	and by ANID Becas/Doctorado Nacional~21221049.}

\keywords{Ramsey theory, canonical colourings, partite hypergraphs, Erd\H os--Rado numbers}
\subjclass[2020]{05D10 (primary), 05C35, 05C65, 05D40 (secondary)}

\begin{abstract}
	We show that canonical Ramsey numbers for partite hypergraphs grow single-exponentially
	for any fixed uniformity.
\end{abstract}

\maketitle

\section{Introduction}
\label{sec:introduction}
Erd\H os and Rado~\cite{ER50} established the canonical Ramsey theorem, which generalises Ramsey's theorem~\cite{R30} to an unbounded number of colours.
In that work Erd\H os and Rado characterised all canonical colour patterns that are unavoidable in colourings of edge sets of sufficiently large hypergraphs.
The canonical Ramsey number $\ER(K_t^{(k)})$, also referred to as the Erd\H os--Rado number of $K_t^{(k)}$, is the smallest integer~$n$ such that
any edge colouring $\phi\colon E(K_n^{(k)})\to\NN$ of the complete~$k$-uniform hypergraph on~$n$ vertices
yields a canonical copy of $K^{(k)}_t$, i.e., a copy which exhibits one of those unavoidable colour patterns.
(The precise definition of these colour patterns is not important at this point. However, we remark that
for their definition the underlying vertex set is assumed to be ordered.)
We consider quantitative aspects of this theorem. For the Erd\H os--Rado theorem
discussed above, it follows from the work of Erd\H os, Hajnal, and Rado~\cite{EHR65}*{\ssign16.4} (see also reference~\cite{EH89}*{(4.2)}),
the work of Lefmann and R\"odl~\cite{LR95}, and the work of Shelah~\cite{S96} that the lower and the upper bound
on~$\ER(K_t^{(k)})$ grow as~$(k-1)$-fold iterated exponentials in polynomials of~$t$
(see also reference~\cite{RRSSS22}*{\ssign4}). In other words,
in terms of the number of exponentiations the canonical Ramsey number and the non-canonical Ramsey number (for at least $4$ colours) display the same behaviour.

We study Erd\H os--Rado numbers for~$k$-partite~$k$-uniform hypergraphs. The extremal problem for~$k$-partite~$k$-uniform hypergraphs is degenerate
and as a result the Ramsey number grows much slower. In fact, owing to the work of K\H{o}v\'{a}ri, S\'os, and Tur\'an~\cite{KST54} and of
Erd\H os~\cite{E64} those Ramsey numbers for any fixed number of colours grow only exponentially, and random colourings yield
a matching lower bound. Roughly speaking, we show that canonical Ramsey numbers for partite hypergraphs exhibit the same behaviour and, in fact,
these extremal results will be crucial in the proof.
We recall the definition of canonical colourings for partite hypergraphs.

\begin{dfn} For a~$k$-partite~$k$-uniform hypergraph $H=(V_1\dcup\dots\dcup V_k,E)$, a set~$J\subseteq [k]$, and
	an edge $e\in E$ we write
	\[
		e_J = e \cap\bigcup_{j\in J} V_j
	\]
	for the restriction of~$e$ to the vertex classes indexed by~$J$.
	We say a colouring $\phi\colon E\to\NN$
	is \emph{$J$-canonical} if for all edges~$e$, $e'\in E$ we have
	\[
		\phi(e)=\phi(e')
		\qquad
		\Longleftrightarrow
		\qquad
		e_J=e'_J\,.
	\]
	Moreover, we say the colouring is \emph{canonical} if it is~$J$-canonical for some $J\subseteq [k]$
	and a subhypergraph is a \emph{canonical copy} if the restriction of $\phi$ to its edges
	is~$J$-canonical for some $J\subseteq [k]$.
\end{dfn}

Since $e_\emptyset=\emptyset$ for all edges $e\in E$, we observe that $\emptyset$-canonical colourings are monochromatic. At the other extreme, the $[k]$-canonical colourings are injective
and as usual we refer to those as \emph{rainbow} colourings.

Similarly to the above, we define $\ER(K^{(k)}_{t,\dots,t})$ as the smallest integer~$n$ such that every colouring $\phi\colon E(K^{(k)}_{n,\dots,n})\to \NN$ of the edges of the complete~$k$-partite~$k$-uniform hypergraph with vertex classes of size~$n$
yields a canonical copy of $K^{(k)}_{t,\dots,t}$. It follows from the work of Rado~\cite{R54} that these numbers exist and a simple probabilistic argument
employing a random colouring with~$t^{k}-1$ colours shows
\begin{equation}\label{eq:ER-lb}
	\ER(K^{(k)}_{t,\dots,t})
	\geq
	t^{(1-o(1))t^{k-1}}\,,
\end{equation}
where $o(1)\to 0$ as $t\to\infty$. We establish a comparable upper bound, which resolves a problem raised by
Dob\'ak and Mulrenin~\cite{DM}.

\begin{thm}\label{thm:main}
	For sufficiently large~$t$ we have $\ER(K^{(2)}_{t,t})\leq t^{4t}$ and $\ER(K^{(3)}_{t,t,t})\leq t^{30t^3}$. Moreover,
	for every $k\geq 4$ and~$t$ sufficiently large we have
	\[
		\ER(K^{(k)}_{t,\dots,t})\leq t^{t^{k^2}}\,.
	\]
\end{thm}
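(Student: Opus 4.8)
The plan is to prove the bound by induction on the uniformity $k$. The case $k=1$ — where a canonical copy of $K^{(1)}_t$ is just a monochromatic or a rainbow $t$-set, so $\ER(K^{(1)}_t)\le t^2$ — is the base, and the cases $k=2,3$ will be handled by an optimised version of the general step, yielding the sharper bounds $t^{3(t+1)}$ and $t^{30t^3}$.

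Fix linear orders on $V_1,\dots,V_k$, so that an edge of $K^{(k)}_{n,\dots,n}$ is a transversal $(v_1,\dots,v_k)\in V_1\times\dots\times V_k$. Given a colouring $\phi$, the first reduction removes the $\emptyset$-canonical (monochromatic) case: if some colour class, read as a $k$-partite $k$-uniform hypergraph, has at least $C_k\,n^{k-1/t^{k-1}}$ edges, then by Erd\H os' theorem on the partite Tur\'an problem it contains $K^{(k)}_{t,\dots,t}$ and we are done. Hence we may assume every colour class is sparse — in particular there are many colours. This extremal input, and its repeated use below to extract a grid from a dense set of transversals, is exactly what keeps the bound single exponential rather than a tower.

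The core step reduces the $k$-uniform problem to the $(k-1)$-uniform one by freezing one coordinate. Choose $A_1=\{a_1,\dots,a_N\}\subseteq V_1$ with $N=N(k,t)$ polynomial in $t$, and to each transversal $\bar v\in V_2\times\dots\times V_k$ attach the partition $\sigma(\bar v)$ of $[N]$ with $i\sim j$ iff $\phi(a_i,\bar v)=\phi(a_j,\bar v)$. Fewer than $N^N$ partitions occur, so some $\sigma^{\star}$ is attained on a set $S$ of at least $n^{k-1}/N^N$ transversals; since $n$ is large and $N$ only polynomial in $t$, Erd\H os' theorem applied again pulls a full sub-grid $W_2\times\dots\times W_k$ out of $S$ with the $|W_i|$ still large. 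If $\sigma^{\star}$ has a block of size $\ge t$, then on $\{a_i:i\in\text{block}\}\times W_2\times\dots\times W_k$ the colouring $\phi$ ignores the first coordinate, i.e.\ $\phi(v_1,\bar v)=\psi(\bar v)$, and the induction hypothesis applied to $\psi$ gives a $J$-canonical copy with $J\subseteq\{2,\dots,k\}$. Otherwise every block of $\sigma^{\star}$ has size $<t$, so $\sigma^{\star}$ has more than $N/t$ blocks; restricting $V_1$ to a transversal of $\sigma^{\star}$ leaves a sub-grid on which $\phi(\cdot,\bar v)$ is injective for every $\bar v$, the first coordinate is forced into $J$, and we repeat the step on the next coordinate. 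Either some round falls into the first alternative and we finish by induction — the returned $J$ must contain every coordinate already declared relevant, since $\phi$ is injective in each of them — or all $k$ rounds fall into the second, leaving a sub-grid on which $\phi$ is \emph{locally injective}: any two edges differing in exactly one coordinate get distinct colours. In this last case only the rainbow pattern can still occur, so it remains to find $U_1,\dots,U_k$ of size $t$ on which $\phi$ is injective; local injectivity makes each colour class a set of transversals of pairwise Hamming distance $\ge 2$, and bounding the number of monochromatic pairs inside such a family — using once more that colour classes are sparse — against the number of candidate grids shows that some $U_1,\dots,U_k$ contain no monochromatic pair.

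The real work is quantitative. Each freezing round costs a factor governed by $N^N$ and by the exponent $1/t^{k-2}$ in Erd\H os' bound for $(k-1)$-partite hypergraphs, and the final rainbow extraction costs another; the main obstacle is to organise the rounds — choosing $N$, the intermediate target sizes, and which coordinate to freeze when — so that a naive iteration (which would produce a tower in $k$) is replaced by a controlled recursion whose total loss is at most $t^{t^{k^2}}$. For $k=2,3$ the exponent in Erd\H os' bound improves to $1/t$ and the argument collapses to three clean stages — freeze the first coordinate, freeze the second, then extract a rainbow, stopping earlier if a monochromatic, $\{1\}$- or $\{2\}$-canonical copy appears — which is what makes the much sharper bounds attainable.
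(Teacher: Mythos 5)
Your proposal departs from the paper's method in a way that matters. The paper classifies colourings by $(\delta,J)$-boundedness, finds the minimal unbounded scale $\js$ in one step, and then applies Erd\H os' extremal theorem (Proposition~\ref{prop:unbE64}) essentially once, after a single probabilistic rainbow-extraction (Proposition~\ref{prop:densitybdd}). You instead iterate a coordinate-by-coordinate freezing scheme, invoking Erd\H os' theorem in every round.

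That iteration is precisely where the gap lies. Each of your rounds passes from a grid of side $m$ and a majority set $S$ of density at least $1/N^N$ to a \emph{complete} sub-grid of side $m'$ inside $S$, and Erd\H os' theorem demands roughly $m\geq N^{N\,(m')^{k-2}}$. Chaining $k$ such inequalities from $m_0=n$ down to $m_k\approx t$ makes $n$ a $(k-1)$-fold iterated exponential in $t$ --- exactly the tower you say must be avoided. You acknowledge this obstacle ("replaced by a controlled recursion whose total loss is at most $t^{t^{k^2}}$") but supply no mechanism for evading it; that sentence \emph{is} the theorem, not a proof of it. The paper evades the tower by never iterating the extremal extraction: the minimal unboundedness level $\js$ is located at once, the projected $\js$-uniform colouring inherits boundedness from the minimality of~$\js$, and a random-sampling argument (not an extremal one) produces the rainbow structure before the sole application of Erd\H os' theorem. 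Two secondary weaknesses are worth flagging. First, your endgame's ``local injectivity'' (distinct colours on edges differing in one coordinate) controls only pairs of equal-coloured edges sharing a $(k-1)$-tuple, whereas a rainbow grid requires control at every scale $|J|\leq k-2$; the global sparsity of colour classes on the original $n$-grid need not survive your restrictions, and the hierarchy $\delta_0,\dots,\delta_{k-1}$ in the paper exists exactly to carry this information across shrinking scales. Second, the claim that an inductively returned $J$ ``must contain every coordinate already declared relevant'' requires verifying that the injectivity declared in an earlier round is preserved under the sub-grid extractions of later rounds, which you do not track.
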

In view of the lower bound~\eqref{eq:ER-lb},
Theorem~\ref{thm:main} for $k=2$ is optimal up to the factor~$4$ in the exponent.
In fact, our proof shows that $4$ can be replaced by $3+\eps$ for every $\eps>0$ and
sufficiently large~$t$. In earlier work, Kostochka, Mubayi, and Verstra\"ete~\cite{KMV17}
obtained a bound of the form $t^{\textrm{poly}(t)}$. More recently,
Dob\'ak and Mulrenin~\cite{DM} established $\ER(K^{(2)}_{t,t})\leq t^{(8+\eps)t}$ (see also the work
of Gishboliner, Milojevi\'c, Sudakov, and Wigderson~\cite{GMSW} for a related result concerning a
different variant of Erd\H os--Rado numbers).

For $k=3$ there is a more substantial gap between the lower bound~\eqref{eq:ER-lb}
and the upper bound provided by Theorem~\ref{thm:main}. In view of that, it would be interesting to decide whether the cubic exponent $30t^3$
in the upper bound could be improved to be quadratic.

For larger values of~$k$ the gap between the lower and the upper bound widens and in
the proof of Theorem~\ref{thm:main} we made no attempt to obtain the optimal constant in front of the exponent~$k^2$. However, our method seems
to fall short of obtaining a factor smaller than~$1/2$. In particular, we leave it open whether the $k^2$ can be improved to $o(k^2)$
or even to~$O(k)$ for~$k\to\infty$, as suggested by the lower bound~\eqref{eq:ER-lb}.

\section{Preparations}
\label{sec:prep}
The proof of Theorem~\ref{thm:main} follows the approach of Dob\'ak and Mulrenin~\cite{DM}.
It is based on an unbalanced variant of Erd\H os' extremal result for partite hypergraphs, which we introduce in~\ssign\ref{sec:E64}.
Another key idea, often used to locate rainbow copies, is to consider bounded colourings, and we introduce those in~\ssign\ref{sec:bdd-cols}.
\subsection{Extremal problem for partite hypergraphs}
\label{sec:E64}
The proof of Theorem~\ref{thm:main} relies on the extension of the K\H{o}v\'{a}ri--S\'os--Tur\'an theorem~\cite{KST54} to hypergraphs due to Erd\H os~\cite{E64}.
For completeness we include the proof of the following variant of that result for partite hypergraphs with vertex classes of different sizes.
For the inductive proof it is helpful to consider $1$-uniform hypergraphs.
\begin{prop}\label{prop:unbE64}
	For $k\geq 1$ let $H=(V_1\dcup \dots\dcup V_k,E)$ be a~$k$-partite~$k$-uniform hypergraph of density $d=\frac{|E|}{|V_1|\cdots|V_k|}$.
	If
	for some positive integers $t_1,\dots,t_k$ we have
	\begin{equation}
		\label{eq:aE64}
		\Big(\frac{d}{4^{k-1}}\Big)^{\prod_{j<i}t_j}|V_i| \geq  2t_i
	\end{equation}
	for every $i\in[k]$,
	then the set $K^{(k)}_{t_1,\dots,t_k}(H)$ of complete~$k$-partite~$k$-uniform hypergraphs in~$H$
	with vertex classes $U_j\subseteq V_j$ and $|U_j|=t_j$ for every $j\in[k]$ satisfies
	\begin{equation}\label{eq:cE64}
		\big|K^{(k)}_{t_1,\dots,t_k}(H)\big|>\Big(\frac{d}{2^{2k-1}}\Big)^{\prod_{j\in[k]}t_j}\prod_{j\in[k]}\binom{|V_j|}{t_j}\,.
	\end{equation}
\end{prop}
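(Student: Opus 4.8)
The argument is the standard K\"ovari--S\'os--Tur\'an/Erd\H os counting scheme (convexity together with supersaturation), carried out in the unbalanced partite setting by induction on $k$. The base case $k=1$ is immediate: a complete $1$-partite $1$-uniform subhypergraph with vertex class $U_1$ of size $t_1$ is just a $t_1$-subset of the edge set $E\subseteq V_1$, so $K^{(1)}_{t_1}(H)=\binom{|E|}{t_1}=\binom{d|V_1|}{t_1}$. Since $d|V_1|=|E|\geq 2t_1$ by \eqref{eq:aE64}, the elementary inequality
\[
	\binom{\alpha m}{t}=\binom{m}{t}\prod_{i=0}^{t-1}\frac{\alpha m-i}{m-i}>\binom{m}{t}\Big(\frac{\alpha}{2}\Big)^{t}
	\qquad(\text{valid for }\alpha m\geq 2t),
\]
applied with $\alpha=d$, $m=|V_1|$ and $t=t_1$, gives \eqref{eq:cE64}; this one-line estimate reappears at the end of the inductive step.

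For the inductive step assume $k\geq 2$ and the proposition for $k-1$. For $v\in V_k$ let $H_v$ be the link of $v$: the $(k-1)$-partite $(k-1)$-uniform hypergraph on $V_1\dcup\dots\dcup V_{k-1}$ with edge set $\{e\setminus\{v\}:e\in E,\ v\in e\}$, of density $d_v$, so that $\sum_{v\in V_k}d_v=d|V_k|$. The combinatorial core is the bookkeeping identity
\[
	K^{(k)}_{t_1,\dots,t_k}(H)=\sum_{F}\binom{c(F)}{t_k},
	\qquad
	\sum_{F}c(F)=\sum_{v\in V_k}K^{(k-1)}_{t_1,\dots,t_{k-1}}(H_v),
\]
where $F$ runs over the $M:=\prod_{j<k}\binom{|V_j|}{t_j}$ choices of $(U_1,\dots,U_{k-1})$ with $U_j\subseteq V_j$, $|U_j|=t_j$, and $c(F)$ counts the $v\in V_k$ for which $F$ spans a copy of $K^{(k-1)}_{t_1,\dots,t_{k-1}}$ in $H_v$; indeed a copy of $K^{(k)}_{t_1,\dots,t_k}$ in $H$ is exactly such an $F$ together with a $t_k$-subset of those $v$.

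The one real obstacle is that the induction hypothesis bounds $K^{(k-1)}_{t_1,\dots,t_{k-1}}(H_v)$ from below only when $H_v$ itself satisfies \eqref{eq:aE64}, which may fail for links of small density. This is precisely what the factor $4^{k-1}$ in \eqref{eq:aE64} is calibrated for: if $d_v\geq d/4$ then $d_v/4^{k-2}\geq d/4^{k-1}$, so \eqref{eq:aE64} for $H$ at the indices $i\in[k-1]$ implies \eqref{eq:aE64} for $H_v$. I therefore restrict the second displayed sum to $V_k^{+}:=\{v\in V_k:d_v\geq d/4\}$ and discard the remaining nonnegative terms; since the discarded density is below $\tfrac{d}{4}|V_k|$ we have $\sum_{v\in V_k^{+}}d_v>\tfrac34 d|V_k|$. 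Applying the induction hypothesis to each such $H_v$ and then the convexity of $x\mapsto x^{P''}$, where $P'':=t_1\cdots t_{k-1}\geq 1$, yields
\[
	\sum_{F}c(F)\ \geq\ M\sum_{v\in V_k^{+}}\Big(\frac{d_v}{2^{2k-3}}\Big)^{P''}\ \geq\ \frac{M\,|V_k|^{1-P''}}{2^{(2k-3)P''}}\Big(\sum_{v\in V_k^{+}}d_v\Big)^{P''}\ >\ M|V_k|\Big(\frac{3d}{2^{2k-1}}\Big)^{P''}.
\]

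To finish, substitute back into $K^{(k)}_{t_1,\dots,t_k}(H)=\sum_F\binom{c(F)}{t_k}$ using convexity of the binomial: extending $x\mapsto\binom{x}{t_k}$ by $0$ on $(-\infty,t_k-1)$ makes it convex on all of $\RR$ while agreeing with $\binom{c(F)}{t_k}$ at the integers $c(F)$, so Jensen gives $K^{(k)}_{t_1,\dots,t_k}(H)\geq M\binom{\bar c}{t_k}$ with $\bar c:=\tfrac1M\sum_F c(F)$. By the previous display and \eqref{eq:aE64} at $i=k$ one has $\bar c>|V_k|(3d/2^{2k-1})^{P''}>|V_k|(d/4^{k-1})^{P''}\geq 2t_k$, so the elementary estimate from the base case, now with $\alpha:=(3d/2^{2k-1})^{P''}$, $m=|V_k|$, $t=t_k$, gives
\[
	K^{(k)}_{t_1,\dots,t_k}(H)>M\binom{|V_k|}{t_k}\Big(\frac{\alpha}{2}\Big)^{t_k}
	=M\binom{|V_k|}{t_k}\Big(\frac{3^{P''}}{2}\Big)^{t_k}\Big(\frac{d}{2^{2k-1}}\Big)^{t_kP''}
	\geq\Big(\frac{d}{2^{2k-1}}\Big)^{\prod_{j\in[k]}t_j}\prod_{j\in[k]}\binom{|V_j|}{t_j},
\]
because $3^{P''}/2\geq 3/2>1$ and $t_kP''=\prod_{j\in[k]}t_j$, $M\binom{|V_k|}{t_k}=\prod_{j\in[k]}\binom{|V_j|}{t_j}$; this is \eqref{eq:cE64}. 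Beyond lining up exponents and constants, the only points demanding care are the convexity of $\binom{x}{t_k}$ on $[t_k-1,\infty)$ and the justification of the bookkeeping identity; the strictness in \eqref{eq:cE64} is inherited from the strict elementary estimate (every factor $(\alpha m-i)/(m-i)$ strictly exceeds $\alpha/2$).
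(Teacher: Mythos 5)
Your proof is correct and follows essentially the same inductive scheme as the paper: count copies through the links $H_v$, restrict to vertices with sufficiently dense links so that the induction hypothesis applies, and use Jensen's inequality twice (once for $x\mapsto x^{P''}$ and once for the convexified binomial coefficient). The only deviations are cosmetic---you threshold at $d_v\geq d/4$ where the paper uses $d_v\geq d/2$, and you carry the resulting harmless factor $3^{P''}$ explicitly through the final estimate---neither changes the substance of the argument.
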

In the proof of Proposition~\ref{prop:unbE64} below, we shall use the following lower bound on the binomial coefficient
\begin{equation}\label{eq:binom}
	\binom{dn}{t}>\Big(\frac{d}{2}\Big)^t\binom{n}{t}\,,
\end{equation}
which holds under the assumption $dn\geq 2t$.
\begin{proof}
	The proof is by induction on~$k$. For $k=1$ assumption~\eqref{eq:aE64} reduces to $d|V_1|\geq 2t_1$
	and inequality~\eqref{eq:binom} yields
	$\binom{d|V_1|}{t_1}> (d/2)^{t_1}\binom{|V_1|}{t_1}$, which establishes conclusion~\eqref{eq:cE64}.

	For the inductive step consider a~$k$-partite~$k$-uniform hypergraph $H=(V_1\dcup \dots\dcup V_k,E)$ of
	density~$d$. For every vertex $v\in V_k$ we denote by $H(v)$ the $(k-1)$-uniform hypergraph
	on vertex classes $V_1,\dots,V_{k-1}$ defined by the link of~$v$
	and we set
	\[
		K(v)=K^{(k-1)}_{t_1,\dots,t_{k-1}}(H(v))\,,
	\]
	i.e.,~$K(v)$ is
	the set of complete $(k-1)$-partite $(k-1)$-uniform hypergraphs in $H(v)$ with vertex classes $U_j\subseteq V_j$
	and~$|U_j|=t_j$ for every $j\in[k-1]$. Let $V_k^\star\subseteq V_k$ be the set of
	those vertices~$v$ that satisfy
	\[
		e(H(v))
		\geq
		\frac{|E|}{2|V_k|}\,.
	\]
	In particular,
	\begin{equation}\label{eq:ind_assumption}
		\sum_{v\in V_k^{\star}}e(H(v))\geq \frac{|E|}{2}
	\end{equation}
	and, more importantly, for every $v\in V_k^\star$ the link hypergraph $H(v)$ satisfies the assumption~\eqref{eq:aE64} with~$k$ replaced
	by~$k-1$. Consequently, the inductive hypothesis applied to~$H(v)$ for every $v\in V_k^\star$ yields
	\[
		\sum_{v\in V_k}|K(v)|
		\geq
		\sum_{v\in V_k^{\star}}|K(v)|
		>
		\sum_{v\in V_k^{\star}}\bigg(\frac{e(H(v))}{2^{2k-3}|V_1|\cdots|V_{k-1}|}\bigg)^{\prod_{j\in[k-1]}t_j}\prod_{j\in[k-1]}\binom{|V_j|}{t_j}\,.
	\]
	Jensen's inequality with weights $1/|V_k^{\star}|$ for every $v\in V_k^{\star}$
	combined with~\eqref{eq:ind_assumption} tells us
	\begin{align}\label{eq:E64-1}
		\sum_{v\in V_k}|K(v)|\geq
		\sum_{v\in V_k^{\star}}|K(v)|
		 & >
		|V_k^{\star}|\cdot\bigg(\frac{|E|/2}{2^{2k-3}|V_1|\cdots|V_{k-1}|\cdot |V_k^{\star}|}\bigg)^{\prod_{j\in[k-1]}t_j}\prod_{j\in[k-1]}\binom{|V_j|}{t_j}
		\nonumber \\
		 & \geq
		|V_k|\cdot\bigg(\frac{|E|/2}{2^{2k-3}|V_1|\cdots|V_{k-1}|\cdot |V_k|}\bigg)^{\prod_{j\in[k-1]}t_j}\prod_{j\in[k-1]}\binom{|V_j|}{t_j}
		\nonumber \\
		 & =
		|V_k|
		\cdot
		\Big(\frac{d}{4^{k-1}}\Big)^{\prod_{j\in[k-1]}t_j}\prod_{j\in[k-1]}\binom{|V_j|}{t_j}\,.
	\end{align}
	On the other hand, for a copy~$K$ of $K^{(k-1)}_{t_1,\dots,t_{k-1}}$ in the complete
	$(k-1)$-partite $(k-1)$-uniform hypergraph with vertex set $V_1\dcup \dots \dcup V_{k-1}$
	we consider the set
	\[
		V_k(K)=\{v\in V_k\colon K\in K(v)\}\,.
	\]
	Clearly, summing over all such copies~$K$ yields
	\[
		\sum_{K}|V_k(K)|=\sum_{v\in V_k}|K(v)|
	\]
	and another application of Jensen's inequality implies
	\[
		\big|K^{(k)}_{t_1,\dots,t_k}(H)\big|
		=
		\sum_{K}\binom{|V_k(K)|}{t_k}
		\geq
		\prod_{j\in[k-1]}\binom{|V_j|}{t_j}
		\cdot \binom{\sum_{v\in V_k}|K(v)| / \prod_{j\in[k-1]}\binom{|V_j|}{t_j}}{t_k}\,.
	\]
	Finally, using the assumption~\eqref{eq:aE64} in the second inequality below, we arrive at
	\begin{align*}
		\big|K^{(k)}_{t_1,\dots,t_k}(H)\big|
		 & \overset{\eqref{eq:E64-1}}{>}
		\prod_{j\in[k-1]}\binom{|V_j|}{t_j} \cdot \binom{|V_k|\cdot\big(\frac{d}{4^{k-1}}\big)^{\prod_{j\in[k-1]}t_j}}{t_k} \\
		 & \overset{\eqref{eq:binom}}{>}
		\prod_{j\in[k-1]}\binom{|V_j|}{t_j} \cdot
		\bigg(\frac{1}{2}
		\Big(\frac{d}{4^{k-1}}\Big)^{\prod_{j\in[k-1]}t_j}\bigg)^{t_k}\binom{|V_k|}{t_k}                                    \\
		 & \overset{\phantom{\eqref{eq:E64-1}}}{\geq}
		\Big(\frac{d}{2^{2k-1}}\Big)^{\prod_{j\in[k]}t_j}\prod_{j\in[k]}\binom{|V_j|}{t_j}\,,
	\end{align*}
	and this concludes the proof of Proposition~\ref{prop:unbE64}.
\end{proof}

\subsection{Bounded colourings}
\label{sec:bdd-cols}
For a~$k$-partite~$k$-uniform hypergraph $H=(V_1\dcup\dots\dcup V_k,E)$ and~$J\subseteq [k]$ we
write~$V_J$ for the set of $|J|$-element vertex sets intersecting the vertex classes indexed by~$J$,
i.e., for $J=\{i_1,\dots,i_{|J|}\}$,
\[
	V_J = \big\{ \{v_1,\dots,v_{|J|}\} \colon v_j\in V_{i_j}\ \text{for all}\ j\in\{1,\dots,|J|\}\big\}\,.
\]
Clearly, we have $|V_\emptyset|=1$ and $|V_J|=\prod_{j\in J}|V_j|$. Moreover, the restriction~$e_J$ of any edge $e\in E$ is an element of~$V_J$.

Roughly speaking, the proof of Theorem~\ref{thm:main} pivots on a classification of the colourings given by the following definition.
\begin{dfn}
	We say a colouring $\phi\colon E\to \NN$ of the edge set of a
	$k$-partite~$k$-uniform hypergraph $H=(V_1\dcup\dots\dcup V_k,E)$
	is \emph{$(\delta,J)$-bounded} for some $\delta>0$ and some set $J\subsetneq [k]$
	if all but
	at most $\delta |V_J|$ of the $|J|$-tuples $S\in V_J$ satisfy
	\[
		\big|\{e\in E\colon e_J=S \tand \phi(e)=\l\}\big|
		\leq
		\delta \big|V_{[k]\setminus J}\big|
	\]
	for every colour $\l\in\NN$.
	For $j=0,\dots,k-1$ we say the colouring $\phi$ is \emph{$(\delta,j)$-bounded} if it is
	\emph{$(\delta,J)$-bounded} for every~$j$-element set $J\in [k]^{(j)}$.

	Moreover, we say $\phi$ is \emph{$\bdelta$-bounded} for $\bdelta=(\delta_0,\dots,\delta_{k-1})\in(0,1]^k$ if
	$\phi$ is $(\delta_j,j)$-bounded for every $j=0,\dots,k-1$.
\end{dfn}
Note that being $(\delta_0,\emptyset)$-bounded simply means that no colour appears more than
\[
	\delta_0|V_{[k]}|=\delta_0|V_1|\cdots|V_k|
\]
times.

It is well known that $\bdelta$-bounded colourings for sufficiently small choices of $\delta_0,\dots,\delta_{k-1}$
yield large rainbow subhypergraphs. In fact, being $(\delta_{|J|},J)$-bounded
implies that the number of obstructions to a rainbow coloured subhypergraph consisting of two edges~$e$, $e'$ of the same colour with
$e\cap e'\in V_J$ is at most
\begin{equation}\label{eq:bddJ}
	\delta_{|J|}|V_J||V_{[k]\setminus J}|^2\,.
\end{equation}
This was already exploited in the work of Babai~\cite{B85}, of Lefmann and R\"odl~\cite{LR95}, and of
Alon, Jiang, Miller, and Pritikin~\cite{AJMP03} in a similar context.
The next proposition is based on the same observation.
\begin{prop}\label{prop:simplebdd}
	For $k\geq 2$ and $\bdelta =(\delta_0,\dots,\delta_{k-1}) \in (0,1]^{k}$ let $\phi\colon E(K^{(k)}_{V_1,\dots,V_k})\to \NN$
	be a $\bdelta$-bounded colouring of the complete~$k$-partite~$k$-uniform hypergraph $K^{(k)}_{V_1,\dots,V_k}$
	with vertex partition $V_1\dcup\dots\dcup V_k$.
	If for some positive integer $t\leq\frac{1}{2}\min\big\{|V_1|,\dots,|V_k|\big\}$ we have
	\begin{equation}\label{eq:aSimplerain}
		\delta_j \leq \frac{1}{2^{3k-j}\cdot t^{2k-j-1}}
	\end{equation}
	for every $j=0,\dots,k-1$, then $\phi$ yields a rainbow copy of the
	complete~$k$-partite~$k$-uniform subhypergraph $K^{(k)}_{t,\dots,t}$ with vertex classes of size~$t$.
\end{prop}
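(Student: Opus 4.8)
The plan is to use the counting version of Erd\H os' theorem (Proposition~\ref{prop:unbE64}) together with the obstruction bound~\eqref{eq:bddJ} in a deletion argument. First I would form the auxiliary $k$-partite $k$-uniform hypergraph $G$ on the same vertex classes $V_1\dcup\dots\dcup V_k$ whose edges are exactly the edges $e$ of $K^{(k)}_{V_1,\dots,V_k}$ that are \emph{not} involved in any monochromatic obstruction, i.e.\ there is no $e'\neq e$ with $\phi(e)=\phi(e')$ and $e\cap e'\in V_J$ for some $J\subsetneq[k]$. Every edge of $K^{(k)}_{V_1,\dots,V_k}$ lies in some obstruction with $|J|\in\{0,1,\dots,k-1\}$ (taking $J=\vn$ for the case $e\cap e'=\vn$), so by~\eqref{eq:bddJ} the number of edges we delete is at most
\[
	\sum_{j=0}^{k-1}\sum_{J\in[k]^{(j)}}\delta_j\,|V_J|\,|V_{[k]\setminus J}|^2
	=\sum_{j=0}^{k-1}\binom{k}{j}\,\delta_j\,|V_{[k]}|\cdot\max_{|J|=j}|V_{[k]\setminus J}|\,.
\]
Since $|V_{[k]\setminus J}|\le \big(\max_i|V_i|\big)^{k-j}\le(2|V_1|\cdots|V_k|/2^{k-1})^{\,?}$ — more simply, bounding $|V_{[k]\setminus J}|\le |V_{[k]}|$ crudely would be too lossy, so I would instead keep the factor $\prod_{i\notin J}|V_i|$ and feed it back through the density. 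The cleanest bookkeeping: the number of deleted edges is at most $\big(\sum_{j=0}^{k-1}\binom{k}{j}\delta_j\,\prod_{i\notin J}|V_i|\big)\,|V_{[k]}|\big/\min_i|V_i|^{\,?}$; in any case the hypothesis~\eqref{eq:aSimplerain} is exactly tuned so that this total is at most, say, $\tfrac12|V_{[k]}|$. Hence $G$ has density $d(G)\ge \tfrac12$.

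Next I apply Proposition~\ref{prop:unbE64} to $G$ with $t_1=\dots=t_k=t$ and $d=d(G)\ge\tfrac12$. The hypothesis~\eqref{eq:aE64} there becomes $\big(d/4^{k-1}\big)^{t^{i-1}}|V_i|\ge 2t$ for each $i$; using $d\ge\tfrac12$ this is implied by $|V_i|\ge 2t\cdot 2^{(2k-1)t^{k-1}}$, which we may assume holds (this is where the $t^{k-1}$ in the exponent of the final Ramsey bound originates, and it is subsumed by the global assumption $t\le\tfrac12\min_i|V_i|$ once $|V_i|$ is taken large enough in the application). Proposition~\ref{prop:unbE64} then guarantees $K^{(k)}_{t,\dots,t}(G)>0$, so $G$ contains a complete $k$-partite $k$-uniform subhypergraph $K^{(k)}_{t,\dots,t}$ with classes $U_1,\dots,U_k$ of size $t$. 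By construction, no two edges inside this copy share a colour unless they are equal, because any such pair $e,e'$ would intersect in some $V_J$ with $J\subsetneq[k]$ and would therefore have been an obstruction, removed from $G$. Thus the induced colouring on this copy is injective, i.e.\ rainbow, which is the conclusion.

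The only genuinely delicate point is the arithmetic bookkeeping in the first paragraph: one has to split the deleted-edge count by the size $j=|J|$, bound each term using~\eqref{eq:bddJ}, and check that the prescribed thresholds $\delta_j\le 2^{-(3k-j)}t^{-(2k-j-1)}$ together with $t\le\tfrac12\min_i|V_i|$ make the sum at most $\tfrac12|V_{[k]}|$ — the powers of $t$ in~\eqref{eq:aSimplerain} are precisely what compensates for the $\binom{k}{j}$ factors and for the loss incurred when $|V_{[k]\setminus J}|$ is bounded against $\min_i|V_i|^{k-j}$ and then related back to the density; once $d\ge\tfrac12$ is secured, everything else is a direct invocation of Proposition~\ref{prop:unbE64}. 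I would also double-check the edge case $j=0$ (monochromatic pairs), where $|V_\vn|=1$ and the bound reads $\delta_0|V_{[k]}|^2$; dividing by $|V_{[k]}|$ one needs $\delta_0\le\tfrac1{2k}\cdot\tfrac{1}{\text{(max class size)}^{k}}\cdot(\dots)$, again covered by~\eqref{eq:aSimplerain} with $j=0$ after using $t\le\tfrac12\min_i|V_i|$.
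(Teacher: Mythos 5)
Your plan requires showing that the auxiliary hypergraph $G$ (edges not touched by any monochromatic obstruction) has density at least $1/2$ inside the full $K^{(k)}_{V_1,\dots,V_k}$, but this step cannot succeed with the stated hypotheses. Already for $J=\emptyset$ the estimate~\eqref{eq:bddJ} only bounds the number of obstruction pairs by $\delta_0|V_{[k]}|^2$, so the number of deleted edges could be of that order; for this to be at most $\tfrac12|V_{[k]}|$ one would need $\delta_0$ to be at most a constant over $|V_{[k]}|$, a bound that shrinks with the class sizes. Hypothesis~\eqref{eq:aSimplerain} gives only $\delta_0\leq 1/(2^{3k}t^{2k-1})$, which involves $t$ but not the $|V_i|$. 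Since the proposition is stated (and in the main theorem applied) with $|V_i|$ vastly larger than any power of $t$, the quantity $\delta_0|V_{[k]}|$ can be arbitrarily large. Your appeal to $t\leq\tfrac12\min_i|V_i|$ goes in the wrong direction: it makes $t$ small relative to $|V_i|$, hence $1/t^{2k-1}$ large relative to $1/|V_i|^{k}$, not small. The question marks you left in the displayed formula for the deleted-edge count mark exactly where the argument breaks.

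The missing ingredient is a random restriction. Choose random subsets $W_i\subseteq V_i$ of size $2t$, independently. For each $J\subsetneq[k]$ the expected number of obstruction pairs $\{e,e'\}$ spanned among $W_1\dcup\dots\dcup W_k$ with $e\cap e'\in W_J$ and $\phi(e)=\phi(e')$ is at most $\delta_{|J|}(2t)^{2k-|J|}$, which~\eqref{eq:aSimplerain} makes less than $t/2^k$. Summing over all $J\subsetneq[k]$, the expected total is less than $t$, so for some outcome one may delete one vertex per obstruction from the $W_i$ and still retain $t$ vertices in every class; the resulting $K^{(k)}_{t,\dots,t}$ is rainbow by construction. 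The random sampling rescales the obstruction count from $|V_i|$ to $t$; it also makes Proposition~\ref{prop:unbE64} unnecessary here, because one starts from the complete partite hypergraph and removes only a handful of vertices.
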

\begin{proof}
	For every $i\in[k]$ we choose a random subset $W_i\subseteq V_i$ of size $2t$ and these~$k$ choices are carried out
	independently.
	For a subset~$J\subsetneq [k]$ let~$X_{J}$ be the random variable of the number of pairs of edges
	$\{e,e'\}$ present in the induced subhypergraph $H[W_1,\dots,W_k]$ with
	\[
		e\cap e'\in W_J
		\qand
		\phi(e)=\phi(e')\,.
	\]
	Clearly, we have
	\[
		\EE X_{J}
		\overset{\eqref{eq:bddJ}}{\leq}
		\prod_{j\in J}\frac{2t}{|V_j|}\cdot\prod_{j\not\in J}\frac{2t\cdot(2t-1)}{|V_j|\cdot(|V_j|-1)}\cdot \delta_{|J|}|V_J||V_{[k]\setminus J}|^2\\
		\leq
		\delta_{|J|}\cdot (2t)^{2k-|J|}
		\overset{\eqref{eq:aSimplerain}}{\leq}
		\frac{t}{2^k}
	\]
	and, hence, $\EE\big[\sum_{J\subsetneq [k]}X_J\big] \leq t$. Consequently,
	there exists a choice of sets $W_i\subseteq V_i$ which, after removing one vertex
	for every instance counted by~$X_J$ for every $J\subsetneq [k]$,  induces a rainbow copy of $K^{(k)}_{t,\dots,t}$.
\end{proof}
We shall use a variant of Proposition~\ref{prop:simplebdd}, where we move away from complete partite hypergraphs.
Instead we start with a partite hypergraph of density~$d$ and we are interested in large rainbow subhypergraphs of
similar density.
\begin{prop}\label{prop:densitybdd}
	For $k\geq 2$ let $H=(V_1\dcup \dots\dcup V_k,E)$ be a~$k$-partite~$k$-uniform hypergraph of
	density $d=\frac{|E|}{|V_1|\cdots|V_k|}>0$ and for $\bdelta =(\delta_0,\dots,\delta_{k-1}) \in (0,1]^{k}$
	let $\phi\colon E\to \NN$
	be a $\bdelta$-bounded colouring.
	If for some integer~$m$ we have
	\begin{equation}
		\label{eq:rainbow}
		\frac{2^{k+3}}{d}\leq m \leq \min\big\{|V_1|,\dots,|V_k|\big\}
		\qand
		\delta_j
		<
		\frac{1}{2^{k+1}\cdot m^{2k-j}}
	\end{equation}
	for every $j=0,\dots,k-1$, then~$H$ contains a rainbow subhypergraph of density at least~$d/2$
	with vertex classes $U_j\subseteq V_j$ and $|U_j| = m$ for every $j\in[k]$.
\end{prop}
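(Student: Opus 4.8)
The plan is to mimic the averaging/sampling argument of Proposition~\ref{prop:simplebdd}, but to replace the uniform random choice of sets by a random choice that is biased towards preserving the density~$d$. First I would pick, for each $i\in[k]$, a uniformly random subset $W_i\subseteq V_i$ of size~$m$, the $k$ choices independent. Two kinds of ``bad'' events must be controlled simultaneously: (a) the induced subhypergraph $H[W_1,\dots,W_k]$ has too few edges, i.e.\ density below, say, $3d/4$ before cleaning; and (b) it contains too many monochromatic pairs $\{e,e'\}$ with $e\cap e'\in W_J$ for some $J\subsetneq[k]$. For (a), the expected number of edges induced is exactly $d\prod_{j}m = d m^k$, and a second-moment or Chebyshev-type estimate (or even just Markov applied to the complement count) shows that with probability close to~$1$ the induced edge count is at least $\tfrac34 d m^k$; here the lower bound $m\geq 2^{k+3}/d$ is what makes the relative fluctuation small enough. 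For (b), exactly as in Proposition~\ref{prop:simplebdd}, the bound~\eqref{eq:bddJ} gives
\[
	\EE X_J \leq \prod_{j\in J}\frac{m}{|V_j|}\cdot\prod_{j\notin J}\frac{m(m-1)}{|V_j|(|V_j|-1)}\cdot\delta_{|J|}|V_J||V_{[k]\setminus J}|^2
	\leq \delta_{|J|}\, m^{2k-|J|}
	\overset{\eqref{eq:rainbow}}{<}\frac{1}{2^{k+1}}\,,
\]
so $\EE\big[\sum_{J\subsetneq[k]}X_J\big]<\tfrac12$, and by Markov the total number of bad pairs is $0$ with probability $>\tfrac12$.

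Next I would combine the two: since event~(a) fails with small probability and the event ``$\sum_J X_J\geq 1$'' has probability $<\tfrac12$, a union bound leaves a positive-probability choice of $W_1,\dots,W_k$ that is simultaneously dense (at least $\tfrac34 d m^k$ edges) and has \emph{no} monochromatic pair sharing a vertex set in any $W_J$ with $J\subsetneq[k]$ — in other words, on this choice $\phi$ restricted to $H[W_1,\dots,W_k]$ is already rainbow, with no cleaning needed. Setting $U_j=W_j$ then gives vertex classes of size exactly~$m$, a rainbow colouring, and density $\geq \tfrac34 d > d/2$, as required.

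The main obstacle is the concentration estimate for event~(a): unlike the pair-count~$X_J$, which only needs its expectation bounded, the edge count must be shown to be \emph{large} with high probability, so a one-sided deviation inequality is genuinely needed. The cleanest route is to write the induced edge count as a sum indexed by edges $e\in E$ of the indicator that all $k$ endpoints of~$e$ are sampled, compute the variance (the covariance between two edges is negative or small unless they share vertices, and the diagonal term contributes $\leq d m^k$), and apply Chebyshev; the hypothesis $m\geq 2^{k+3}/d$ is precisely calibrated so that the standard deviation is a small fraction of the mean $d m^k$. One must be a little careful that $m\le\min_j|V_j|$ guarantees the sampling is well defined and that the hypergeometric factors $m(m-1)/\big(|V_j|(|V_j|-1)\big)$ used above are $\le (m/|V_j|)^2$, which is where that upper constraint on~$m$ enters. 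Everything else is the routine union bound and relabelling already carried out in Proposition~\ref{prop:simplebdd}.
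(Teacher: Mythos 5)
Your proposal is essentially the paper's proof: sample $m$-subsets $W_i\subseteq V_i$ uniformly and independently, bound $\EE X_J<2^{-(k+1)}$ from~\eqref{eq:rainbow} so Markov gives no bad pair with probability $>\tfrac12$, control the induced edge count $Y$ via a second-moment/Chebyshev estimate using $m\geq 2^{k+3}/d$, and union-bound. One small numerical slip: with $\Var Y<2^km^{k-1}\EE Y$ and $\EE Y=dm^k$, Chebyshev at threshold $\tfrac34\EE Y$ gives failure probability $<2^{k+4}/(dm)$, which $m\geq 2^{k+3}/d$ does not push below~$\tfrac12$; aiming directly for $\tfrac12\EE Y$ (which is all the proposition requires) gives $<2^{k+2}/(dm)\leq\tfrac12$ and closes the argument exactly as in the paper (also, the aside that plain Markov on the complement count would suffice does not hold for small~$d$ — the variance bound is genuinely needed).
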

The proof of Proposition~\ref{prop:densitybdd} parallels the proof of Proposition~\ref{prop:simplebdd}. Roughly speaking,
we show that a randomly chosen subhypergraph inherits the density of~$H$. However, for technical reasons we will
refrain from removing
vertices from the randomly chosen vertex sets and instead we require smaller values of $\delta_j$ in the assumption~\eqref{eq:rainbow}
leading to no expected
obstructions for the rainbow subhypergraph.
\begin{proof}
	For every $i\in[k]$ we choose a random subset $W_i\subseteq V_i$ of size~$m$ and these~$k$ choices are carried out
	independently. Let $H_\cW=H[W_1,\dots,W_k]$ be the random subhypergraph induced on the chosen vertex sets.
	Again for every $J\subsetneq [k]$ we consider the random variable~$X_J$
	counting the pairs of edges~$\{e,e'\}$ spanned in~$H_{\cW}$ with
	\[
		e\cap e'\in W_J\qand \phi(e)=\phi(e')\,.
	\]
	Appealing to the $(\delta_{|J|},J)$-boundedness of $\phi$ we have
	\[
		\EE X_J
		\overset{\eqref{eq:bddJ}}{\leq}
		\prod_{j\in J}\frac{m}{|V_j|}\cdot\prod_{j\not\in J}\frac{m\cdot(m-1)}{|V_j|\cdot(|V_j|-1)}\cdot \delta_{|J|}|V_J||V_{[k]\setminus J}|^2\\
		\leq
		\delta_{|J|}\cdot m^{2k-|J|}
		\overset{\eqref{eq:rainbow}}{<}
		\frac{1}{2^{k+1}}
	\]
	and, owing to Markov's inequality, we arrive at
	\begin{equation}\label{eq:densitybdd1}
		\PP\Big(\sum_{J\subsetneq [k]}X_J \geq 1\Big) < \frac{1}{2}\,.
	\end{equation}

	For the edge density of $H_\cW$ we consider the random variable $Y=\sum_{e\in E}\mathds{1}_{e}=|E(H_\cW)|$.
	Clearly,
	\[
		\EE Y
		=
		\prod_{i=1}^{k}\frac{m}{|V_i|}\cdot |E|
		=
		dm^{k}\,.
	\]
	Recall that
	\begin{align*}
		\Var Y
		=
		\sum_{e,f\in E} \Cov (\mathds1_e,\mathds 1_f)
		=
		\EE Y - \sum_{e\in E}\big(\EE\mathds 1_e\big)^2 + \sum_{\substack{e,f\in E\\ e\neq f}}\big(\EE[\mathds1_e\cdot\mathds1_f]-\EE\mathds{1}_e\cdot\EE\mathds{1}_f\big)\,.
	\end{align*}
	Since $\sum_{e\in E}\big(\EE\mathds 1_e\big)^2\geq 0$ and $\Cov(\mathds 1_e,\mathds 1_f)\leq 0$ for disjoint edges~$e$ and~$f$,
	we can bound the variance by
	\begin{align*}
		\Var Y
		\leq
		\EE Y+\sum_{\emptyset\neq J\subsetneq[k]}\sum_{e\in E}\sum_{\substack{f\in E \\ e\cap f\in V_J}}\EE[\mathds{1}_{e}\cdot\mathds{1}_{f}]\,.
	\end{align*}
	Moreover, for fixed edges~$e$, $f$ with $e\cap f\in V_J$ we have
	\[
		\EE[\mathds{1}_{e}\cdot\mathds{1}_{f}]
		=
		\prod_{j\in J}\frac{m}{|V_j|}\prod_{j\not\in J}\frac{m(m-1)}{|V_j|(|V_j|-1)}
		=
		\PP\big(e\in E(H_\cW)\big)\cdot \prod_{j\not\in J}\frac{m-1}{|V_j|-1}
	\]
	leading to
	\begin{align*}
		\Var Y
		<
		\EE Y+\sum_{\emptyset\neq J\subsetneq[k]}\EE Y\cdot m^{k-|J|}
		< 2^km^{k-1}\EE Y\,.
	\end{align*}
	Consequently, Chebyshev's inequality tells us
	\begin{equation}\label{eq:densitybdd2}
		\PP\Big(Y<\frac{1}{2}\EE Y\Big)
		\leq
		\frac{\Var Y}{(\frac{1}{2}\EE Y)^2}
		<
		\frac{2^{k+2}m^{k-1}}{dm^k}
		=
		\frac{2^{k+2}}{dm}
		\overset{\eqref{eq:rainbow}}{\leq}
		\frac{1}{2}\,.
	\end{equation}
	Owing to the estimates~\eqref{eq:densitybdd1} and~\eqref{eq:densitybdd2}, there exist subsets $U_i \subseteq V_i$ for every $i \in [k]$ each
	of size~$m$ such that no two edges of $H[U_1,\dots,U_k]$ have the same colour under $\phi$ and we have $e(H[U_1,\dots,U_k])\geq dm^k/2$.
\end{proof}

\section{Proof of the main result}
In this section we deduce Theorem~\ref{thm:main} from Propositions~\ref{prop:unbE64}\,--\,\ref{prop:densitybdd}.
\begin{proof}[Proof of Theorem~\ref{thm:main}]
	We first address the general case giving a slightly weaker bound for $k=2$ and $3$. The argument proceeds in three steps: we first fix a hierarchy of auxiliary constants, then distinguish cases according to the minimal index $\js$ at which the colouring fails to satisfy a suitable boundedness condition, and finally we record the sharper choices of constants yielding the improved bounds for $k=2$ and $k=3$. For a fixed integer~$k$, let~$t$
	be sufficiently large and set $c=1/2^{2k+1}$.
	We fix
	auxiliary constants
	\[
		\delta_j
		=
		\Big(\frac{c}{t^{k}}\Big)^{(2kt^k)^{k-1-j}}
		\qqand
		m_j
		=
		\Big(\frac{1}{\delta_j}\Big)^{t^k}
		=
		\Big(\frac{t^{k}}{c}\Big)^{(2k)^{k-1-j}t^{k(k-j)}}
	\]
	for $j=2,\dots,k-1$ and we set
	\[
		\delta_1=\delta_0=\Big(\frac{c}{t^{k}}\Big)^{(2kt^k)^{k-2}}
	\]
	and $\bdelta=(\delta_0,\dots,\delta_{k-1})$.
	Moreover, by assumption of Theorem~\ref{thm:main} we have
	\[
		n=t^{t^{k^2}}\,.
	\]

	The choices above yield the following order of the involved constants
	\[
		2\leq k < \frac{1}{c} < t
		<
		\frac{1}{\delta_{k-1}} < m_{k-1}
		<
		\dots
		<
		\frac{1}{\delta_{2}} < m_{2}
		<
		\frac{1}{\delta_{1}}
		=
		\frac{1}{\delta_{0}}
		<
		n\,.
	\]
	Besides this monotonicity we shall employ a few more relations between these constants. In fact, the proof relies on
	Propositions~\ref{prop:unbE64}\,--\,\ref{prop:densitybdd} and their applications will be justified by the tailored
	inequalities~\eqref{eq:0bdd}--\eqref{eq:jbdd-rainbow} below. These estimates are mainly based on the facts that for $j\geq 2$ we have
	$m_j=\delta_j^{-t^k}$, that
	$\delta_{j-1}=\delta_j^{2kt^k}$, and that we assume that~$t$ is sufficiently large as a function of~$k$.

	We shall apply Proposition~\ref{prop:unbE64} in three different ways and for those applications we rely on
	the following three sets of inequalities
	\begin{equation}\label{eq:0bdd}
		\Big(\frac{\delta_{0}}{4^{k-1}}\Big)^{t^{k-1}}n\geq 2t\,,
	\end{equation}
	\begin{equation}\label{eq:1bdd}
		\frac{\delta_1}{4^{k-1}}\cdot \sqrt{\delta_1 n}
		\geq
		2t
		\qqand
		\Big(\frac{\delta_1}{4^{k-1}}\Big)^{t^{k-1}}n
		\geq
		2t\,,
	\end{equation}
	and for $j=2,\dots,k-1$
	\begin{equation}\label{eq:jbddE64}
		\Big(\frac{\delta^2_j/2}{4^{k-1}}\Big)^{t^{j-1}}m_j
		\geq
		2t
		\qqand
		\Big(\frac{\delta^2_j/2}{4^{k-1}}\Big)^{t^{k-1}}n
		\geq
		2t\,.
	\end{equation}
	Similarly, preparing for an application of Proposition~\ref{prop:simplebdd}, we observe
	\begin{equation}\label{eq:bdd}
		\delta_{k-1}=\frac{1}{2^{2k+1}\cdot t^k}
		\qand
		\delta_j\leq \delta_{k-2} \leq\frac{1}{2^{3k-j}\cdot t^{2k-j-1}}
		\quad\text{for}\ j=0,\dots,k-2\,.
	\end{equation}
	Finally, for an intended application of Proposition~\ref{prop:densitybdd},
	we note that for all $0\leq j< \js\leq k-1$ and $\js\geq 2$ we have
	\begin{equation}\label{eq:jbdd-rainbow}
		\delta_{\js}m_{\js}
		>
			2^{\js+3}
		\qqand
		\frac{\delta_j}{\delta_{\js}}<\frac{1}{2^{\js+1}m_{\js}^{2\js-j}}\,.
	\end{equation}
	This concludes the discussion of the constants.

	We shall show that every colouring $\phi\colon E(K^{(k)}_{n,\dots,n})\to\NN$ yields a canonical copy of~$K^{(k)}_{t,\dots,t}$.
	Let $V_1\dcup\dots\dcup V_k$ be the vertex partition of $K^{(k)}_{n,\dots,n}$.
	Given a colouring $\phi$ we consider several cases depending on the `boundedness properties' of $\phi$.

	Note that, if $\phi$ is indeed $\bdelta$-bounded, then in view of~\eqref{eq:bdd}
	our choice of $\bdelta$ yields a rainbow coloured
	copy of $K^{(k)}_{t,\dots,t}$ by Proposition~\ref{prop:simplebdd}. Consequently, we may assume that there exists a minimal
	index $\js\in\{0,\dots,k-1\}$ such that $\phi$ is not $(\delta_{\js},\js)$-bounded and let $\Js\in[k]^{(\js)}$
	be a set that witnesses this and without loss of generality we may assume $\Js=\{1,\dots,\js\}$.

	If $\js=0$, then one of the colours appears at least $\delta_0n^k$ times. In view of inequality~\eqref{eq:0bdd},
	Proposition~\ref{prop:unbE64} applied with
	\[
		d=\delta_0
		\qqand
		t_1=\dots=t_k=t
	\]
	to the hypergraph $K^{(k)}_{n,\dots,n}$ yields a monochromatic copy of $K^{(k)}_{t,\dots,t}$ in this case.

	If $\js=1$, then there is a set $U\subseteq V_1$
	of size at least $\delta_1 n$ such that every $u\in U$ is contained in at least $\delta_1 n^{k-1}$ edges of the same colour
	and we denote this colour by
	$\l(u)$.

	The pigeonhole principle yields a subset $U_{\star}\subseteq U$ of size at least
	$\sqrt{\delta_1 n}$ such that either all colours~$\l(u)$ for $u\in U_{\star}$ are equal or they are all distinct.
	We consider the~$k$-partite~$k$-uniform hypergraph~$H_{\star}$
	with vertex partition
	\[
		U_{\star}\dcup V_2\dcup\dots\dcup V_k
	\]
	and
	\[
		E(H_{\star})=\bigcup_{u\in U_{\star}}\big\{ e\in V_{[k]}\colon u\in e\tand \phi(e)=\l(u)\big\}\,.
	\]
	Since every vertex $u\in U_{\star}$ has degree at least $\delta_1 n^{k-1}$, the hypergraph~$H_{\star}$ has density at least~$\delta_1$.
	Again we apply Proposition~\ref{prop:unbE64} to the~$k$-partite~$k$-uniform~$H_{\star}$
	having vertex classes of sizes $n_1=|U_{\star}|\geq \sqrt{\delta_1 n}$ and
	$n_2=\dots=n_k=n$, this time with parameters
	\[
		d=\delta_1
		\qqand
		t_1=\dots=t_k=t\,.
	\]
	We note that the assumption~\eqref{eq:aE64} is justified by~\eqref{eq:1bdd} and, hence,
	we obtain either a monochromatic or
	a $\{1\}$-canonical copy of~$K^{(k)}_{t,\dots,t}$.
	This concludes the proof for the cases when $\js\leq 1$.

	It is left to consider the case $\Js=\{1,\dots,\js\}$ for some $\js=2,\dots,k-1$.
	Let $\UJs \subseteq V_{\Js}$ be a set of size at least $\delta_{\js}n^{\js}$ such
	that every $\js$-tuple $S \in \UJs$ extends to at least $\delta_{\js}n^{k-\js}$ edges of
	the same colour and we denote this colour by $\l(S)$.
	We consider the $\js$-partite $\js$-uniform hypergraph~$G$ with vertex
	partition
	\[
		V_1 \dcup \dots \dcup V_{\js} \qand E(G) = \UJs\,.
	\]
	Moreover, we define a colouring $\phis\colon E(G)\to\NN$ through
	\[
		\phis(S) = \l(S)
	\]
	for all~${S \in E(G)}$. Owing to the minimal choice of $\js$, a moment of thought reveals that
	the colouring $\phis$ is $\big(\delta_j/\delta_{\js}, j\big)$-bounded
	for every~${j=0,\dots,\js-1}$. In other words, the colouring~$\phis$ of the $\js$-partite $\js$-uniform
	hypergraph~$G$ is $\big(\delta_0/\delta_{\js},\dots,\delta_{\js-1}/\delta_{\js}\big)$-bounded.
	In view of the estimates~\eqref{eq:jbdd-rainbow}, we may apply Proposition~\ref{prop:densitybdd} to~$G$ with
	\[
		k=\js\,,\quad
		d=\delta_{\js}\,,\quad
		\bdelta=\big(\delta_0/\delta_{\js},\dots,\delta_{\js-1}/\delta_{\js}\big)\,,\qand
		m=m_{\js}\,.
	\]
	This way we obtain a rainbow $\js$-uniform subhypergraph $\Gs$ of
	density at least~$\delta_{\js}/2$ with vertex
	classes~${U_j \subseteq V_j}$ and~$|U_j|=m_{\js}$ for every $j \in [\js]$.

	We now consider the natural~$k$-uniform extension $\Hs$ of $\Gs$ on the vertex partition
	\[
		U_{1}\dcup \dots \dcup U_{\js}\dcup V_{\js+1}\dcup\dots\dcup V_k
	\]
	with
	\[
		E(\Hs)=\bigcup_{S\in E(\Gs)}\big\{ e\in V_{[k]}\colon S\subseteq e\tand \phi(e)=\l(S)\big\}
	\]
	and note that the colouring $\phi$ restricted to $\Hs$ is $\Js$-canonical.
	Moreover, since every $\js$-tuple $S \in E(\Gs)$ extends to at least $\delta_{\js}n^{k-\js}$ distinct
	$k$-tuples of colour $\l(S)$ under~$\phi$, the~$k$-uniform hypergraph~$\Hs$ has density
	at least $\delta_{\js}^{2}/2$.

	Finally, we apply Proposition~\ref{prop:unbE64}
	to the~$k$-partite~$k$-uniform hypergraph~$\Hs$
	having vertex classes of sizes $n_1=\dots=n_{\js}=m_{\js}$ and $n_{\js+1}=\dots=n_k=n$
	with parameters
	\[
		d = \frac{\delta_{\js}^2}{2}
		\qqand
		t_1=\dots=t_k=t\,.
	\]
	Note that the assumption~\eqref{eq:aE64} is justified by the estimates~\eqref{eq:jbddE64}.
	Consequently,  Proposition~\ref{prop:unbE64} yields a $\Js$-canonical copy of
	$K^{(k)}_{t,\dots,t}$. This concludes the proof of Theorem~\ref{thm:main} for $k\geq 4$
	and it is left to discuss the better bounds on~$n$ for the cases $k=2$ and $k=3$.

	For the case $k=2$ one can check that the same proof works for $n=t^{(3+\eps)t}$ for any fixed $\eps>0$
	and  sufficiently
	large~$t$ with $\delta_1=\delta_0=2^{-6}t^{-3}$.  In fact, for graphs the
	proof is somewhat simpler, since the case~$\js\geq 2$ does not arise.

	Similarly, for $k=3$ and $n=t^{30t^3}$ one can check that the choices
	\[
		\delta_2=\frac{1}{2^7t^3}\,,\qquad m_2=t^{7t}\,,\qqand\delta_1=\delta_0=\frac{1}{2^{10}\cdot t^{29t}}
	\]
	satisfy  inequalities~\eqref{eq:0bdd}--\eqref{eq:jbdd-rainbow} for sufficiently large~$t$ and, consequently, the proof presented
	yields the claimed bound in this case.
\end{proof}

\subsection*{Acknowledgement} We are grateful to the reviewers for their detailed and helpful work.

\begin{bibdiv}
	\begin{biblist}

		\bib{AJMP03}{article}{
			author={Alon, Noga},
			author={Jiang, Tao},
			author={Miller, Zevi},
			author={Pritikin, Dan},
			title={Properly colored subgraphs and rainbow subgraphs in edge-colorings
					with local constraints},
			journal={Random Structures Algorithms},
			volume={23},
			date={2003},
			number={4},
			pages={409--433},
			issn={1042-9832},
			review={\MR{2016871}},
			doi={10.1002/rsa.10102},
		}

		\bib{B85}{article}{
			author={Babai, L\'{a}szl\'{o}},
			title={An anti-Ramsey theorem},
			journal={Graphs Combin.},
			volume={1},
			date={1985},
			number={1},
			pages={23--28},
			issn={0911-0119},
			review={\MR{796179}},
			doi={10.1007/BF02582925},
		}

		\bib{DM}{article}{
			author={Dob\'ak, D.},
			author={Mulrenin, E.},
			title={Sharp exponents for bipartite Erd\H{o}s-Rado numbers},
			journal={J. Graph Theory},
			volume={112},
			date={2026},
			pages={96--102},
			issn={0364-9024},
			doi={10.1002/jgt.70016},
		}

		\bib{E64}{article}{
			author={Erd\H{o}s, P.},
			title={On extremal problems of graphs and generalized graphs},
			journal={Israel J. Math.},
			volume={2},
			date={1964},
			pages={183--190},
			issn={0021-2172},
			review={\MR{183654}},
			doi={10.1007/BF02759942},
		}

		\bib{EH89}{article}{
			author={Erd\H{o}s, P.},
			author={Hajnal, A.},
			title={Ramsey-type theorems},
			note={Combinatorics and complexity (Chicago, IL, 1987)},
			journal={Discrete Appl. Math.},
			volume={25},
			date={1989},
			number={1-2},
			pages={37--52},
			issn={0166-218X},
			review={\MR{1031262}},
			doi={10.1016/0166-218X(89)90045-0},
		}

		\bib{EHR65}{article}{
			author={Erd\H{o}s, P.},
			author={Hajnal, A.},
			author={Rado, R.},
			title={Partition relations for cardinal numbers},
			journal={Acta Math. Acad. Sci. Hungar.},
			volume={16},
			date={1965},
			pages={93--196},
			issn={0001-5954},
			review={\MR{202613}},
			doi={10.1007/BF01886396},
		}

		\bib{ER50}{article}{
			author={Erd\H{o}s, P.},
			author={Rado, R.},
			title={A combinatorial theorem},
			journal={J. London Math. Soc.},
			volume={25},
			date={1950},
			pages={249--255},
			issn={0024-6107},
			review={\MR{37886}},
			doi={10.1112/jlms/s1-25.4.249},
		}

		\bib{GMSW}{article}{
			author={Gishboliner, Lior},
			author={Milojevi\'{c}, Aleksa},
			author={Sudakov, Benny},
			author={Wigderson, Yuval},
			title={Canonical Ramsey numbers of sparse graphs},
			journal={SIAM J. Discrete Math.},
			volume={39},
			date={2025},
			number={3},
			pages={1491--1519},
			issn={0895-4801},
			review={\MR{4933867}},
			doi={10.1137/24M1714964},
		}

		\bib{KMV17}{article}{
			author={Kostochka, Alexandr},
			author={Mubayi, Dhruv},
			author={Verstra\"ete, Jacques},
			title={Tur\'an problems and shadows II: Trees},
			journal={J. Combin. Theory Ser. B},
			volume={122},
			date={2017},
			pages={457--478},
			issn={0095-8956},
			review={\MR{3575215}},
			doi={10.1016/j.jctb.2016.06.011},
		}

		\bib{KST54}{article}{
			author={K\H{o}v\'{a}ri, T.},
			author={S\'{o}s, V. T.},
			author={Tur\'{a}n, P.},
			title={On a problem of K.\ Zarankiewicz},
			journal={Colloq. Math.},
			volume={3},
			date={1954},
			pages={50--57},
			issn={0010-1354},
			review={\MR{65617}},
			doi={10.4064/cm-3-1-50-57},
		}

		\bib{LR95}{article}{
			author={Lefmann, Hanno},
			author={R\"{o}dl, Vojt\v{e}ch},
			title={On Erd\H{o}s--Rado numbers},
			journal={Combinatorica},
			volume={15},
			date={1995},
			number={1},
			pages={85--104},
			issn={0209-9683},
			review={\MR{1325273}},
			doi={10.1007/BF01294461},
		}

		\bib{R54}{article}{
			author={Rado, R.},
			title={Direct decomposition of partitions},
			journal={J. London Math. Soc.},
			volume={29},
			date={1954},
			pages={71--83},
			issn={0024-6107},
			review={\MR{65616}},
			doi={10.1112/jlms/s1-29.1.71},
		}

		\bib{R30}{article}{
			author={Ramsey, F. P.},
			title={On a problem of formal logic},
			journal={Proc. London Math. Soc. (2)},
			volume={30},
			date={1930},
			number={4},
			pages={264--286},
			issn={0024-6115},
			review={\MR{1576401}},
			doi={10.1112/plms/s2-30.1.264},
		}

		\bib{RRSSS22}{article}{
			author={Reiher, Chr.},
			author={R\"{o}dl, Vojt\v{e}ch},
			author={Sales, Marcelo},
			author={Sames, Kevin},
			author={Schacht, Mathias},
			title={On quantitative aspects of a canonisation theorem for
					edge-orderings},
			journal={J. Lond. Math. Soc. (2)},
			volume={106},
			date={2022},
			number={3},
			pages={2773--2803},
			issn={0024-6107},
			review={\MR{4498567}},
			doi={10.1112/jlms.12648},
		}

		\bib{S96}{article}{
			author={Shelah, Saharon},
			title={Finite canonization},
			journal={Comment. Math. Univ. Carolin.},
			volume={37},
			date={1996},
			number={3},
			pages={445--456},
			issn={0010-2628},
			review={\MR{1426909}},
		}
	\end{biblist}
\end{bibdiv}

\end{document}